\newcommand{\fin}{\hspace*{\fill}$\square$\vspace*{2mm}}
\theoremstyle{plain}
\newtheorem{theorem}{Theorem}[section]
\newtheorem{lemma}[theorem]{Lemma}
\newtheorem{proposition}[theorem]{Proposition}
\theoremstyle{definition}
\newtheorem{definition}[theorem]{Definition}
\theoremstyle{remark}
\newtheorem{remark}[theorem]{\sc Remark}
\def\bC{{\mathbb C}}
\def\bN{{\mathbb N}}
\def\bP{{\mathbb P}}
\def\bR{{\mathbb R}}
\def\max{{\rm max}}
\def\const.{{\rm const.}}
\title[Lipschitz normally embedded set and tangent cones at infinity]{Lipschitz normally embedded set and tangent cones at infinity}
\author{Luis Renato G. Dias}
\address{Universidade Federal de Uberl\^andia, Faculdade de Matem\'atica, Av. Jo\~ao Naves de \'Avila 2121, 1F-153 - CEP: 38408-100, Uberl\^andia, Brazil.}
\email{lrgdias@ufu.br}
\author{Nilva Rodrigues Ribeiro}
\address{Universidade Federal de Vi\c cosa, Campus Rio Parana\'iba, Km 7, Zona Rural, MG - 230 Rodovi\'ario - CEP: 38810-000, Rio Parana\'iba, Brazil.}
\email{nilva.ribeiro@ufv.br}
\date {\today}
\keywords{Lipschitz geometry, tangent cone at infinity, analytic set, algebraic set}
\subjclass{58K30, 14B05}
\thanks{The first author was partially supported by the Fapemig-Brazil Grant APQ-00431-14 and CNPq-Brazil grants 401251/2016-0 and 304163/2017-1.}
\begin{document}
	
\begin{abstract} 
We prove that any analytic set in $\bC^n$ with a unique tangent cone at infinity is an algebraic set. We prove that the degree of a complex algebraic set in $\bC^n$, which is Lipschitz normally embedded at infinity, is equal to the degree of its  tangent cone at infinity.   
\end{abstract}

\maketitle

\section{Introduction}

This note is motivated by the arguments presented in \cite{FS}. 

Let $X$ be a path connected subset  of $\bC^n$. Given  $x, y \in X$, we set  the inner distance on $X$ between $x$ and $y$, denoted by $d_X(x, y)$, as the infimum of the length($\gamma$) where  $\gamma$ varies on the set of continuous paths on $X$ connecting $x$ to $y$. We   recall the following definition from \cite{BM}. 

\begin{definition} A path connected subset $X$ of $\bC^n$ is called Lipschitz normally embedded if there exists a positive real number $C$ such that 
	\[d_X(x,y) \leq C \|x- y\|, \]
	for all $x, y \in X$.
\end{definition}

Lipschitz normal embedding and necessary conditions for a set to be Lipschitz normally embedded have been studied by many authors; see for instance \cite{BF, DTi, FS-IMRN, KPH, NPP, NPP1} and the references cited therein. 

In a recent work, Fernandes and Sampaio \cite{FS} introduced the following definition in the global case. 

\begin{definition} A path connected subset $X$ of $\bC^n$ is called Lipschitz normally embedded at infinity if there exists a compact subset $K \subset X$ 
	such that $X \setminus K$ is Lipschitz normally embedded.    
\end{definition}

When a pure dimensional analytic set $X \subset \bC^n$ has a unique  tangent cone at infinity, we denote it by $C_\infty(X)$ and call it the tangent cone of $X$ at infinity; see Definitions \ref{d:vec-inf} and \ref{d:cone-FS} for details. The main theorem of \cite{FS} is the following one: 

\begin{theorem}[{\cite[Theorem 1.1]{FS}}]\label{th:FS} 
	Let $X\subset \bC^n$ be a closed and pure $d$-dimensional analytic subset. Suppose
	$X$ has a unique tangent cone at infinity and this cone is a $d$-dimensional complex linear subspace of $\bC^n$. If $X$ is Lipschitz normally embedded at infinity, then  $X$ is an
	affine linear subspace of $\bC^n$.
\end{theorem}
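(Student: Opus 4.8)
The plan is to obtain Theorem~\ref{th:FS} as a corollary of the two results announced in the abstract: \textbf{(A)} every closed, pure $d$-dimensional analytic subset of $\bC^n$ with a unique tangent cone at infinity is algebraic; and \textbf{(B)} if such an $X$ is moreover algebraic and Lipschitz normally embedded at infinity, then $\deg X=\deg C_\infty(X)$. Granting \textbf{(A)} and \textbf{(B)}, Theorem~\ref{th:FS} is immediate: $X$ has a unique tangent cone at infinity, so by \textbf{(A)} it is algebraic; $C_\infty(X)$ is a $d$-dimensional complex linear subspace, so $\deg C_\infty(X)=1$, and \textbf{(B)} gives $\deg X=1$; finally, a reduced, pure $d$-dimensional algebraic subset of $\bC^n$ of degree $1$ has irreducible projective closure equal to a linear $\bP^d\subset\bP^n$, so $X=\bP^d\cap\bC^n$ is an affine linear subspace of dimension $d$. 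Thus the content lies in \textbf{(A)} and \textbf{(B)}, which I sketch next.

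For \textbf{(A)} I would compactify in $\bP^n=\bC^n\cup H_\infty$, with $H_\infty$ the hyperplane at infinity, and consider the closure $\overline X$ of $X$ in $\bP^n$. A point of $H_\infty$, corresponding to a direction $v\in\bC^n\setminus\{0\}$, lies in $\overline X$ precisely when $v$ is a limit of directions $x_k/\|x_k\|$ with $x_k\in X$, $\|x_k\|\to\infty$; uniqueness of the tangent cone at infinity identifies this accumulation set with $\bP(C_\infty(X))$, and for a pure $d$-dimensional $X$ the cone $C_\infty(X)$ is a complex analytic cone of dimension $d$. Hence $\overline X\cap H_\infty=\bP(C_\infty(X))$ has dimension $d-1<d$, so $X=\overline X\setminus H_\infty$ is a pure $d$-dimensional analytic subset of the complex manifold $\bP^n\setminus\bP(C_\infty(X))$; an extension theorem of Remmert--Stein type then shows $\overline X$ is a pure $d$-dimensional analytic subset of $\bP^n$, and Chow's theorem shows it is algebraic, whence $X=\overline X\cap\bC^n$ is algebraic. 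The delicate point is analyticity of $\overline X$ along $H_\infty$, and this is exactly what the uniqueness hypothesis controls.

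For \textbf{(B)} I would argue by reduction to a multiplicity statement. Write the intersection cycle $\overline X\cdot H_\infty=\sum_i\mu_i[Z_i]$, the $Z_i$ being the $(d-1)$-dimensional components of $\bP(C_\infty(X))$ and $\mu_i\ge1$. By B\'ezout $\deg X=\deg\overline X=\sum_i\mu_i\deg Z_i$, whereas $\deg C_\infty(X)=\sum_i\deg Z_i$, so it suffices to show $\mu_i=1$ for all $i$. Suppose $\mu_{i_0}\ge2$. After a linear change of coordinates assume the affine cone over $Z_{i_0}$ is $\bC^d\times\{0\}$ and let $\pi\colon\bC^n\to\bC^d$ be the first projection, chosen (it can be) so that $\pi|_X$ is proper; let $\Delta\subset\bC^d$ be its discriminant, an algebraic hypersurface. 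Over large $z$ pointing towards $Z_{i_0}$ and away from $\Delta$, $X$ consists of $\mu_{i_0}$ distinct sheets which are graphs $w=w_j(z)$ with $w_j(z)=o(\|z\|)$; choose $v\in\bC^d$ generic and outside the tangent cone at infinity of $\Delta$, and set $z_0:=Rv$. For $R\gg0$ the points $p_j=(z_0,w_j(z_0))$ are distinct with $\|p_1-p_2\|=o(R)$, while $\mathrm{dist}(z_0,\Delta)\ge cR$ for a fixed $c>0$. Any path in $X\setminus K$ joining $p_1$ to $p_2$ can be perturbed off $\pi^{-1}(\Delta)$ and off the singular locus of $X$, so under the $1$-Lipschitz map $\pi$ it projects to a loop at $z_0$ in $\bC^d\setminus\Delta$ with nontrivial monodromy; such a loop must leave the ball of radius $cR$ about $z_0$, hence has length $\ge2cR$, so $d_{X\setminus K}(p_1,p_2)\ge2cR$. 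This contradicts $d_{X\setminus K}(p_1,p_2)\le C\|p_1-p_2\|=o(R)$ for $R$ large. Hence all $\mu_i=1$ and $\deg X=\deg C_\infty(X)$.

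The routine parts are the compactification bookkeeping in \textbf{(A)} and the classification of degree-$1$ varieties. The main obstacle I anticipate is the quantitative step in \textbf{(B)}: making precise that $\mu_{i_0}\ge2$ produces $\ge2$ genuine sheets of $X$ near $\bC^d\times\{0\}$ (via the normalization and the intersection multiplicities along $Z_{i_0}$); that $v$ can be chosen so the discriminant stays at distance $\gtrsim R$ from the ray $\bR_{>0}v$ for all large $R$ (using that the tangent cone at infinity of $\Delta$ is a proper subcone of $\bC^d$); and that a loop joining distinct sheets must actually encircle $\Delta$ and hence be long, which is most delicate when $d\ge2$ and $\Delta$ is unbounded. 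One should also confirm that the set-theoretic $C_\infty(X)$ is the support of $\overline X\cdot H_\infty$, so the two degrees compared in \textbf{(B)} are the intended ones, and dispose of the reducible / disconnected-at-infinity case, where Lipschitz normal embedding at infinity fails outright.
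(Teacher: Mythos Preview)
Your overall strategy---deduce Theorem~\ref{th:FS} from the two statements (A) and (B), then use that a pure $d$-dimensional algebraic set of degree~$1$ is affine linear---is exactly the paper's: Theorem~\ref{th:FS} is recovered as the degree-$1$ case of Theorem~\ref{th:main}. The content is indeed in (A) and (B), and your deduction from them is correct.

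Where you diverge is in the proofs of (A) and (B). For (A) you compactify, identify $\overline X\cap H_\infty$ with $\bP(C_\infty(X))$, and invoke Remmert--Stein plus Chow. The paper instead uses Rudin's \emph{algebraic region} criterion (Theorem~\ref{t:Ru}): the key lemma (Theorem~\ref{t:V-CV-same}) shows that if $C_\infty(X)$ lies in an algebraic region of type $(k,n)$ then so does $X$, whence $X$ is algebraic by Rudin. This approach simultaneously yields $\dim X=\dim C_\infty(X)$ (Proposition~\ref{p:samedim}) and, crucially, a \emph{common} linear projection $\pi_\Omega$ that is proper on both $X$ and $C_\infty(X)$. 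Your Remmert--Stein route is perfectly valid when the cone is already known to be algebraic of dimension $\le d$ (as in Theorem~\ref{th:FS}), and is arguably more standard; the paper's route is more self-contained and packages everything needed for (B).

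For (B) your monodromy core---a path joining two sheets projects to a loop that must meet the discriminant, hence has length $\gtrsim R$---is the same as the paper's (compare your argument with \eqref{eq:dist}--\eqref{eq:3-1}). The bookkeeping differs: you compare $\deg X=\sum_i\mu_i\deg Z_i$ and $\deg C_\infty(X)=\sum_i\deg Z_i$ via B\'ezout and reduce to $\mu_i=1$; the paper compares sheet numbers directly, using that $\pi_\Omega|_X$ and $\pi_\Omega|_{C_\infty(X)}$ are ramified covers of degrees $\deg X$ and $\deg C_\infty(X)$ (Lemma~\ref{l:proj}), and shows that the $d_1$ lifts $\lambda_i(t)$ of $tv$ in $X$ rescale to lifts in $C_\infty(X)$, forcing a collision if $d_1>d_2$. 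The paper's setup avoids your first anticipated obstacle (translating $\mu_{i_0}\ge2$ into genuine sheets) because the sheets are produced directly by the covering; your B\'ezout formulation is equivalent but requires the extra identification you flag. Both approaches need that the chosen direction $v$ stays uniformly far from the discriminant along the ray, which the paper obtains from $v\notin C_\infty(\Sigma_1)$ via \cite[Lemma~2.4]{LP}, exactly as you propose.
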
  

 Let $V\subset\bC^n$ be an algebraic set.  Bearing in mind that  $ \deg V=1$ if and only if $V$ is a linear subspace of $\bC^n$; see for instance Proposition 3.3 of  \cite{FS}.  We prove in this note that Theorem \ref{th:FS} holds true with the following more general  statement. 

\begin{theorem}\label{th:main} 
	Let $X\subset \bC^n$ be a closed and pure $s$-dimensional analytic subset. Suppose
	$X$ has a unique tangent cone at infinity and this cone is a pure $k$-dimensional complex algebraic  set of $\bC^n$. Then $X$ is an algebraic set and  $s = k$. Moreover, if $X$ is Lipschitz normally embedded at infinity, then  $\deg X = \deg C_{\infty}(X)$. 
\end{theorem}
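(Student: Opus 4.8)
Throughout I grant the first assertion of Theorem~\ref{th:main}, so that $X$ is algebraic, $s=k$, and $C:=C_\infty(X)$ is a pure $k$-dimensional complex algebraic cone; the plan is to obtain $\deg X=\deg C$ in two stages, the first purely algebro-geometric and the second using the Lipschitz hypothesis. \emph{Stage 1: $\deg C\le\deg X$, with equality iff the hyperplane section at infinity is generically reduced.} Pass to projective closures in $\bP^n$. Since $X$ is unbounded of dimension $k\ge 1$, its projective closure $\overline X$ is a pure $k$-dimensional reduced subvariety with $\deg\overline X=\deg X$, not contained in the hyperplane at infinity $H\cong\bP^{n-1}$ (as $X=\overline X\setminus H$ is nonempty); and it is immediate from the definition of the tangent cone at infinity (Definitions~\ref{d:vec-inf}--\ref{d:cone-FS}) that $C$ is the affine cone over $\overline X\cap H$. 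By Krull's principal ideal theorem $\overline X\cap H$ is pure $(k-1)$-dimensional; write its intersection cycle as $\overline X\cdot H=\sum_{j}m_j[W_j]$, with $W_j$ the irreducible components and $m_j\ge 1$ their multiplicities. Since the degree of an affine cone equals the degree of its base, $\deg C=\sum_j\deg W_j$, whereas $\sum_j m_j\deg W_j=\deg\overline X=\deg X$. Hence $\deg C\le\deg X$, and equality holds precisely when every $m_j=1$.

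\emph{Stage 2: Lipschitz normal embedding at infinity forces all $m_j=1$.} I argue by contradiction: suppose $m_{j_0}\ge 2$, and let $Z_0\subseteq C$ be the cone over $W_{j_0}$. Fix a generic linear projection $\pi\colon\bC^n\to\bC^k$ that is finite on $X$ and on $C$, of degrees $\deg X$ and $\deg C$ respectively, and choose a unit vector $v\in Z_0$ so that a truncated round conical neighbourhood of the ray $\bR_{>0}\pi(v)$ misses the discriminant of $\pi|_X$ and so that the corresponding region of $\bC^n$ avoids the components of $C$ other than $Z_0$. Then, for a small round cone $\Gamma$ about $\bR_{>0}v$ and all radii $\ge R_0$, the set $X\cap\Gamma\cap\{\,\|\cdot\|\ge R_0\,\}$ lies in an $o(\|\cdot\|)$-neighbourhood of $Z_0$ and is an unbranched $m_{j_0}$-sheeted cover of a simply connected base, hence splits into $m_{j_0}$ pairwise disjoint sheets $S_1,\dots,S_{m_{j_0}}$; here one invokes the degeneration statement that, as $u\to\infty$ generically, the $\deg X$ points of $(\pi|_X)^{-1}(u)$ cluster in groups of sizes $m_1,m_2,\dots$ about the points of $(\pi|_C)^{-1}(u/\|u\|)$. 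Now pick $p_r\in S_1$ and $q_r\in S_2$ lying over the common point $rv$: then $\|p_r\|,\|q_r\|\sim r$ and $\|p_r-q_r\|=o(r)$, while any path in $X$ from $p_r$ to $q_r$ must leave $\Gamma\cap\{\|\cdot\|\ge R_0\}$ (there $S_1$ and $S_2$ lie in distinct connected components), and inspection of the three ways to exit — radius dropping below $r/2$, radius exceeding $2r$, or crossing the lateral boundary of $\Gamma$ — produces in each case a subpath of Euclidean diameter $\gtrsim r$. Hence $d_X(p_r,q_r)\gtrsim r$, so $d_X(p_r,q_r)/\|p_r-q_r\|\to\infty$ along points escaping every compact subset of $X$, contradicting Lipschitz normal embedding at infinity. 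Therefore every $m_j=1$, and combining with Stage 1, $\deg X=\deg C_\infty(X)$.

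\emph{The main obstacle.} The delicate part is Stage 2, specifically the precise and quantitative form of the ``multisheeted cover near infinity'' picture: (i) choosing $\pi$ and $v$ so that an entire truncated round conical neighbourhood — not merely the ray — stays off the discriminant of $\pi|_X$ and away from the other components of $C$; (ii) proving that the fibres of $\pi|_X$ degenerate to those of $\pi|_C$ with exactly the multiplicities $m_j$, which is the step that identifies ``number of sheets'' with $m_{j_0}\ge 2$; and (iii) controlling, uniformly in the radius, the distance from each sheet $S_i$ to $Z_0$, so that the Euclidean comparisons above are legitimate for all large $r$. The remaining ingredients — the path–length estimates at the end, and the passage from ``all $m_j=1$'' back to $\deg C=\deg X$ — are routine.
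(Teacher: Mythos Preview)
Your plan is sound and close in spirit to the paper's, but you should not grant the first assertion: the paper proves it in a few lines using Rudin's criterion (Theorem~\ref{t:Ru}) together with Theorem~\ref{t:V-CV-same}, showing that since $C_\infty(X)$ lies in an algebraic region of type $(k,n)$, so does $X$, whence $X$ is algebraic and $s=k$ by Proposition~\ref{p:samedim}.

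For the degree equality, both the paper and you argue that one inequality is unconditional and that strict inequality is ruled out, via LNE, by producing two preimages of $tv$ that coalesce after rescaling yet remain inner-far. The key difference is that the paper circumvents your main obstacle~(ii) entirely. Rather than fixing a component $W_{j_0}$ with $m_{j_0}\ge 2$ and analysing the degeneration with its precise multiplicities, the paper simply takes the $d_1=\deg X$ liftings $\lambda_1(t),\dots,\lambda_{d_1}(t)$ of $tv$ in $X$, shows each $\frac{1}{t_j}\lambda_i(t_j)$ subconverges to a point of $(\pi_\Omega|_{C_\infty(X)})^{-1}(v)$, and invokes pigeonhole: since that fibre has only $d_2=\deg C_\infty(X)$ points, $d_1>d_2$ forces two liftings to share a limit. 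No multiplicity or degeneration statement is needed. For the inner-distance lower bound, the paper projects any path between the two liftings down to $V_1$, obtaining a loop based at $t_jv$ that is not contractible in $V_1\setminus\Sigma_1$; since $\mathrm{dist}(t_jv,\Sigma_1)>\delta t_j$ by \cite[Lemma~2.4]{LP} and $\pi_\Omega$ is orthogonal, the path has length $\ge 2\delta t_j$. This is cleaner than your cone-exit argument (in which, incidentally, the ``radius exceeding $2r$'' exit is spurious: the region $\Gamma\cap\{\|\cdot\|\ge R_0\}$ is unbounded above) and dispenses with your obstacles (i) and (iii) as well. Your Stage~1, on the other hand, is more informative than the paper's bare citation of \cite{BFS}, since it identifies the equality case with reducedness of $\overline X\cap H$.
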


We remark that from equality (2.1) of \cite{BFS}, it  follows that $\deg X \geq \deg C_{\infty}(X) $ is always true for any pure dimensional complex algebraic sets $X$ and $C_{\infty}(X)$.  

This note is organized as follows. In section 2, we recall the definition of algebraic region after \cite{Ru} and the definition of tangent cones at infinity. In  Theorem \ref{t:V-CV-same}, we prove that if an unbounded set $V\subset \bC^n$ has a unique tangent cone at infinity, we say $C_{\infty}(V)$, and this cone lies in an algebraic region, then there exists an algebraic region $\Omega$ such that $V$ and $C_{\infty}(V)$ lie in $\Omega$. 

As a consequence of Theorem \ref{t:V-CV-same}, it follows that $\dim X = \dim C_{\infty}(X)$ for any complex algebraic set (Proposition \ref{p:samedim}). This last result can be seen as a version at infinity of a result of Whitney  on tangent cones at a point of an analytic variety. 

In Section \ref{s:degree}, we recall the definition of degree of a complex  algebraic set and we present the proof of Theorem \ref{th:main}. The proof is  motivated by the arguments of \cite{FS}. 

\vspace{.2cm}
Finally, we observe that a  local version of Theorem \ref{th:main} follows from Corollary 3.13 and Remark 3.8 of \cite{FS-IMRN}. Precisely,  let $X\subset \bC^m$ be a pure analytic set and $0 \in X$. We say that $X$ is Lipschitz normally embedded at $0$ if there exists a neighborhood $U\subset \bC^m$ of $0$ such that $X\cap U$ is Lipschitz normally embedded. The tangent cone of $X$ at $0$ is denoted by $C(X,0)$; see for instance section 8.1 of  \cite{Ch}.  
We denote by $\mu_0 (X)$  and $\mu_0(C(X,0))$ the multiplicity at $0$ of $X$  and $C(X,0)$ respectively; see for instance section 11 of \cite{Ch}. Thus, a  local version of Theorem \ref{th:main} is the next result, whose proof follows from Corollary 3.13 and Remark 3.8 of \cite{FS-IMRN}.  

\begin{theorem}\label{t:local} Let $X \subset \bC^m$ be a pure dimensional complex analytic set with $0\in X$. If $X$ is a Lipschitz normally embedded set at $0$, then $\mu_0 (X) =\mu_0(C(X,0))$.  
\end{theorem}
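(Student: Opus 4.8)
The plan is to reduce the statement to the assertion that Lipschitz normal embedding at $0$ forces the tangent cone of $X$ at $0$ to be ``generically reduced'', and then to obtain that assertion from \cite{FS-IMRN}.

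First I would isolate the part that requires no hypothesis on $X$. Recall that $\mu_0(X)$ equals the Hilbert--Samuel multiplicity of the local ring $\mathcal{O}_{X,0}$ at its maximal ideal, and that this multiplicity is unchanged if one replaces $\mathcal{O}_{X,0}$ by its associated graded ring, whose spectrum is the scheme-theoretic tangent cone of $X$ at $0$; this tangent cone is pure $s$-dimensional, its underlying reduced analytic set being $C(X,0)$ (Whitney). Writing $C(X,0)=C_1\cup\dots\cup C_r$ for the decomposition into irreducible components and $m_i\ge 1$ for the multiplicity with which $C_i$ occurs in the scheme-theoretic tangent cone, additivity of the Hilbert--Samuel multiplicity over top-dimensional components gives
\[
\mu_0(X)=\sum_{i=1}^r m_i\,\mu_0(C_i),\qquad \mu_0(C(X,0))=\sum_{i=1}^r \mu_0(C_i).
\]
Hence $\mu_0(X)\ge\mu_0(C(X,0))$ unconditionally — the local analogue of the inequality $\deg X\ge\deg C_\infty(X)$ recorded after Theorem \ref{th:main} — with equality if and only if every $m_i$ equals $1$. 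Equivalently, via the identification of multiplicity with the density of a complex analytic set, the identity to be proved is $\Theta_{2s}(X,0)=\Theta_{2s}(C(X,0),0)$.

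It then remains to rule out $m_i\ge 2$ under the hypothesis that $X$ is Lipschitz normally embedded at $0$. This is where I would invoke \cite{FS-IMRN}: its main theorem asserts that the tangent cone of a Lipschitz normally embedded germ is again Lipschitz normally embedded, and Corollary 3.13 together with Remark 3.8 there refine this to exactly the equality of densities (equivalently, of multiplicities) of $X$ and $C(X,0)$ at $0$. The geometric reason is that a component $C_i$ occurring with $m_i\ge 2$ produces, at every sufficiently small scale $r>0$, pairs of points of $X$ near the direction $C_i$ whose distance in $\bC^m$ is arbitrarily small compared with their inner distance in $X$ — the higher-multiplicity (``cusp'', or ``wrapping'') behaviour already visible for the plane cuspidal cubic — and this is incompatible with any uniform bound $d_X(x,y)\le C\|x-y\|$ near $0$. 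Combining this with the previous paragraph yields $\mu_0(X)=\mu_0(C(X,0))$.

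The one genuinely hard step is this last implication, namely that Lipschitz normal embedding at $0$ excludes higher-multiplicity components of $C(X,0)$: the multiplicity comparison above is formal, but passing from the global Lipschitz inequality to the infinitesimal statement that a generic linear projection $X\cap B_r\to\bC^s$ becomes unramified over the tangent directions as $r\to 0$ requires the uniform bi-Lipschitz control of the small-scale links of $X$ established in \cite{FS-IMRN}; with that control in hand, quoting Corollary 3.13 and Remark 3.8 of \cite{FS-IMRN} is the efficient route, which is the one taken here.
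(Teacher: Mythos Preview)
Your proposal is correct and follows essentially the same route as the paper: the paper's entire proof is the single line ``follows from Corollary~3.13 and Remark~3.8 of \cite{FS-IMRN}'', which is precisely the citation you invoke at the decisive step. Your surrounding Hilbert--Samuel/associated-graded discussion (giving $\mu_0(X)=\sum_i m_i\,\mu_0(C_i)\ge \mu_0(C(X,0))$ with equality iff all $m_i=1$) is correct and provides useful context the paper omits, but it is not logically needed once you grant that the cited results of \cite{FS-IMRN} already yield the equality of densities directly.
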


We point out that another proof of Theorem \ref{t:local} may be given  by  similar argument of the proof  of Theorem \ref{th:main}.

\section{Algebraic region and tangent cone at infinity}\label{s:2}

\subsection{Algebraic region}

We start by recalling the definition of algebraic region after \cite{Ru}. 

\begin{definition}[{See p. 672 of \cite{Ru}}]\label{d:Ru} Let $n\geq 2$.  A set $\Omega \subset \bC^n$ is called an algebraic region of type $(k,n)$ if there exist vector subspaces $V_1, V_2$ in $\bC^n$ and positive real numbers $A, B$ such that the following conditions hold: $\dim V_1=k$, $\dim V_2=n-k$, $\bC^n = V_1 + V_2$ and $\Omega$ consists of   those $z\in \bC^n$ for which:     
	
\begin{equation}\label{eq:alg.reg.}
	 \|z''\| \leq A (1 + \|z'\|)^B,
\end{equation}
	where $z=z' + z''$, with $z'\in V_1$ and $z'' \in V_2$. 
\end{definition} 
 
A geometric characterization of analytic sets of $\bC^n$ which are algebraic is given by the next theorem. 

\begin{theorem}[See Theorem 2 of \cite{Ru}]\label{t:Ru} A closed analytic subset  $V\subset \bC^n$ of pure dimension $k$  is  algebraic  if and only if $V$ lies in some algebraic region $\Omega$ of type $(k,n)$. 
\end{theorem}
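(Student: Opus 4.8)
The plan is to prove the two implications separately: the forward direction by Noether normalization together with a root bound, and the converse by exhibiting $V$ as a branched cover of $\bC^k$ whose defining symmetric functions turn out to be polynomials. For the forward direction, suppose $V$ is algebraic of pure dimension $k$. By Noether normalization (after a generic linear change of coordinates) I may assume that the projection $\pi \colon \bC^n \to V_1$ onto a $k$-dimensional coordinate subspace restricts to a finite surjective morphism $V \to V_1 \cong \bC^k$; equivalently, the coordinate ring $\bC[V]$ is integral over $\bC[z_1,\dots,z_k]$. Writing $\bC^n = V_1 \oplus V_2$ with $V_2$ the span of the remaining coordinates, each coordinate function $z_j$ with $j > k$ then satisfies a monic relation $z_j^{m_j} + a_{1}^{(j)}(z')\,z_j^{m_j-1} + \cdots + a_{m_j}^{(j)}(z') = 0$ on $V$ with $a_i^{(j)} \in \bC[z']$. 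The elementary estimate bounding the moduli of the roots of a monic polynomial by its coefficients yields $|z_j| \le C_j\,(1+\|z'\|)^{B_j}$ on $V$, where $B_j$ controls the degrees of the $a_i^{(j)}$; summing over $j$ produces a bound of the form \eqref{eq:alg.reg.}, so $V$ lies in an algebraic region of type $(k,n)$.

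For the converse, suppose $V$ lies in an algebraic region $\Omega$ determined by $V_1, V_2, A, B$, and let $\pi \colon \bC^n \to V_1 \cong \bC^k$ be the projection along $V_2$. Inequality \eqref{eq:alg.reg.} shows that $\pi^{-1}(\{\|z'\| \le R\}) \cap V$ is bounded, hence compact since $V$ is closed; thus $\pi|_V$ is proper. Each fiber is a compact analytic subset of an affine space $\{z'\}\times V_2$, hence finite, so $\pi|_V$ is a proper finite map; by the Remmert proper mapping theorem its image is a $k$-dimensional analytic subset of $\bC^k$, i.e.\ all of $\bC^k$. Consequently $\pi|_V \colon V \to \bC^k$ is a branched covering of some finite degree $p$, unramified over the complement of a proper analytic (discriminant) set $\Delta \subset \bC^k$.

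Over $\bC^k \setminus \Delta$ label the $p$ sheets $\zeta_1(z'),\dots,\zeta_p(z') \in V$, and for each coordinate $w_\ell$ of $V_2$ form $P_\ell(z',T) = \prod_{q=1}^{p}\bigl(T - w_\ell(\zeta_q(z'))\bigr)$. Its coefficients are the elementary symmetric functions of the fiber values, hence single-valued holomorphic functions on $\bC^k\setminus\Delta$; they are locally bounded near $\Delta$ by properness and \eqref{eq:alg.reg.}, so by the Riemann extension theorem they extend to entire functions on $\bC^k$. The growth bound $|w_\ell(\zeta_q(z'))| \le A(1+\|z'\|)^B$ forces each such coefficient to have polynomial growth, and a Liouville-type argument via the Cauchy estimates shows that an entire function on $\bC^k$ of polynomial growth is a polynomial. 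Hence every $P_\ell \in \bC[z',T]$, and $V$ is contained in the algebraic set $W = \{\,P_\ell(z', (z'')_\ell) = 0 \text{ for all } \ell\,\}$. Since $W$ is a finite branched cover of $\bC^k$ it has pure dimension $k$; as $V \subseteq W$ is a closed analytic subset of the same dimension, $V$ is a union of irreducible components of $W$, each of which is algebraic, so $V$ is algebraic.

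The main obstacle is the converse direction, and within it the verification that $\pi|_V$ is a well-structured branched covering: extracting properness from \eqref{eq:alg.reg.}, controlling the fibers, and --- most delicately --- extending the symmetric functions holomorphically across the discriminant $\Delta$ before invoking the Liouville-type growth theorem. The forward direction, and the final descent from ``$V \subseteq W$ with matching dimension'' to ``$V$ algebraic'', are comparatively routine.
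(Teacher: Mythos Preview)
The paper does not give its own proof of this statement: Theorem~\ref{t:Ru} is quoted verbatim as Theorem~2 of \cite{Ru} and used as a black box throughout. There is therefore nothing in the paper to compare your argument against.

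That said, your outline is essentially Rudin's original proof and is correct. The forward direction via Noether normalization and root bounds is standard. For the converse, the key steps---properness of $\pi|_V$ from \eqref{eq:alg.reg.}, finiteness of fibers, Remmert's theorem to get a branched cover of $\bC^k$, holomorphic extension of the elementary symmetric functions across the discriminant, and the Liouville-type characterization of polynomials among entire functions of polynomial growth---are exactly the ingredients Rudin uses. The only place where a reader might want one more sentence is the final step: once $V\subseteq W$ with $W$ algebraic of pure dimension $k$ and $V$ closed analytic of pure dimension $k$, you invoke that $V$ is a union of irreducible components of $W$. This is fine, but it relies on the fact that the analytic irreducible components of an algebraic set coincide with its algebraic ones (or, equivalently, on the identity theorem for irreducible analytic sets), which is worth stating explicitly.
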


Let $\Omega \subset \bC^n$ be an algebraic region of type $(k,n)$, with $V_1, V_2\subset \bC^n$ and  constants $A, B$ as in Definition \ref{d:Ru}. We may choose linear coordinates $(x, y)$ in $\bC^n$ such that $x\in V_1$ and $y\in V_2$. In this coordinate system, we denote by $\pi_{\Omega}$ the canonical projection from  $\bC^n$ to $V_1$, i.e., $\pi_{\Omega}(x,y)=x$. In the proof of Proposition \ref{p:recip-Ru} we need a fact concerning the projection $\pi_{\Omega}$, as follows.

\begin{lemma}\label{l:1}   If $V$ is  a closed set in $\bC^n$ such that $V \subset \Omega$, then the restriction $\pi_{\Omega}: V \to V_1$ is a proper mapping.  	 
\end{lemma}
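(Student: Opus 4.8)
The plan is to verify properness in the most direct way possible, namely to show that the preimage under $\pi_{\Omega}|_V$ of an arbitrary compact set is compact. So I would fix a compact set $K \subset V_1$ and work in the linear coordinates $(x,y)$ adapted to the decomposition $\bC^n = V_1 + V_2$, in which $\pi_{\Omega}(x,y) = x$. Then
\[ (\pi_{\Omega}|_V)^{-1}(K) = V \cap (K \times V_2). \]
This set is closed in $\bC^n$, since $V$ is closed by hypothesis and $K \times V_2$ is closed (because $K$, being compact, is closed). By the Heine--Borel theorem it therefore suffices to prove that $V \cap (K \times V_2)$ is bounded.

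For the boundedness the only input needed is the defining inequality \eqref{eq:alg.reg.} of the algebraic region. Since $K$ is compact there is a constant $R > 0$ with $\|x\| \le R$ for every $x \in K$. If $(x,y) \in V \subset \Omega$ with $x \in K$, then \eqref{eq:alg.reg.} gives $\|y\| \le A(1+\|x\|)^B \le A(1+R)^B$. Hence every point of $V \cap (K \times V_2)$ has both of its components bounded, so the set is bounded; combined with the previous step, it is compact. Since $\pi_{\Omega}|_V$ is continuous and preimages of compact sets are compact, $\pi_{\Omega}|_V$ is proper.

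I do not expect a serious obstacle here: the entire content of the statement is the geometric fact that the shape of an algebraic region forces the $V_2$-component of any of its points to be controlled by the $V_1$-component, so that fibers (and more generally preimages of bounded sets) cannot run off to infinity in the $V_2$-direction. The only points deserving a word of care are purely topological — that for a continuous map into a metrizable space properness is equivalent to compact sets having compact preimages, and that the closedness of $V \cap (K \times V_2)$ comes from $V$ being closed in all of $\bC^n$, not merely relatively closed in $\Omega$.
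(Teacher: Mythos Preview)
Your argument is correct. The paper's own proof uses the same geometric content (the defining inequality \eqref{eq:alg.reg.} bounds the $V_2$-component in terms of the $V_1$-component) but packages it differently: instead of checking directly that preimages of compacta are compact, the authors verify the sequential criterion for properness by showing that any sequence $\{z_j\}\subset V$ with $\|z_j\|\to\infty$ must satisfy $\|\pi_{\Omega}(z_j)\|\to\infty$, which follows immediately from \eqref{eq:alg.reg.}. Your direct Heine--Borel route and the paper's sequential route are logically equivalent here and equally short; your version has the minor advantage of making the closedness hypothesis on $V$ explicitly visible (it is what guarantees $(\pi_{\Omega}|_V)^{-1}(K)$ is closed in $\bC^n$), whereas in the paper's formulation that hypothesis is used implicitly in passing from ``preimages of bounded sets are bounded'' to properness.
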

\begin{proof} 
	Let $\{z_j\}_{j\in\bN} $ be any sequence in $V$  with $\lim_{j\to \infty}\|z_j\|=\infty$. We have to show that $\lim_{j\to \infty}  \| \pi_{\Omega}(z_j)\|=\infty$.  We may write $z_j=(x_j,  y_j)$, with $x_j\in V_1$ and $y_j\in V_2$. By hypothesis, $(x_j, y_j)$ satisfies the inequality \eqref{eq:alg.reg.} and, therefore,   $\lim_{j\to \infty} \| x_j\|=\infty$. This finishes the proof, since $\pi_{\Omega}(z_j)=x_j$.
\end{proof}

Let $V\subset \bC^n$ be an algebraic set. It follows from \cite[p. 677]{Ru} that if  $\dim V \leq k$, then  $V$ lies in an algebraic region of type $(k,n)$. The converse is also true, by the next result.   

\begin{proposition}\label{p:recip-Ru} Let  $V$ be   an algebraic set  in $\bC^n$. If    
 $V$ lies in an algebraic region of type $(k,n)$, then $\dim V \leq k$. 	
\end{proposition}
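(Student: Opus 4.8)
The plan is to restrict the linear projection $\pi_{\Omega}\colon\bC^n\to V_1$ to $V$ and show that this restriction is a finite-to-one regular map; since $\dim V_1=k$, a dominant morphism with finite fibres onto its image cannot increase dimension, so $\dim V\le k$ will follow. As a first reduction, write $V=\bigcup_i V^{(i)}$ into irreducible components; then $\dim V=\max_i\dim V^{(i)}$ and each $V^{(i)}\subset V\subset\Omega$, so it suffices to treat the case $V$ irreducible.

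So set $\pi:=\pi_{\Omega}|_V\colon V\to V_1$, which in the linear coordinates $(x,y)$ with $x\in V_1$, $y\in V_2$ is the polynomial map $(x,y)\mapsto x$. The key step is to show that every fibre of $\pi$ is finite. Fix $x_0\in V_1$. Then $\pi^{-1}(x_0)=V\cap(x_0+V_2)$ is an algebraic subset of the affine subspace $x_0+V_2$, and on that subspace the defining inequality \eqref{eq:alg.reg.} of $\Omega$ reads $\|z''\|\le A(1+\|x_0\|)^B$, a constant depending only on $x_0$. Hence $\pi^{-1}(x_0)$ is a bounded algebraic set, hence finite (a bounded — equivalently, compact — complex algebraic set is finite: each of its irreducible components is a compact connected complex analytic space, on which the coordinate functions, being holomorphic, are constant by the maximum principle, so that component reduces to a point). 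Note that the compactness of $\pi^{-1}(x_0)$ is also immediate from Lemma \ref{l:1}, which asserts that $\pi$ is proper; so finiteness of the fibres can be read off either from \eqref{eq:alg.reg.} directly or from Lemma \ref{l:1}.

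To conclude, let $W\subset V_1$ be the Zariski closure of $\pi(V)$; it is irreducible and $\pi\colon V\to W$ is a dominant morphism of irreducible varieties. By the theorem on the dimension of the fibres of a dominant morphism, a generic fibre of $\pi$ has dimension $\dim V-\dim W$; since every fibre is finite this difference is $0$, so $\dim V=\dim W$. As $W\subset V_1$, we get $\dim V=\dim W\le\dim V_1=k$, as desired.

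The only point that requires any care is the finiteness of the fibres of $\pi$, and that is precisely where the hypothesis that $V$ lies in an algebraic region enters — through the uniform bound \eqref{eq:alg.reg.} on the $V_2$-component (equivalently, through the properness supplied by Lemma \ref{l:1}), together with the elementary fact that a bounded complex algebraic set is finite. Everything else is standard dimension theory of morphisms, so I expect no further obstacle.
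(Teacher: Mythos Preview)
Your proof is correct and follows essentially the same route as the paper: restrict the projection $\pi_\Omega$ to $V$, use the algebraic-region bound (equivalently Lemma~\ref{l:1}) to see that the map is proper with finite fibres, and read off $\dim V\le\dim V_1=k$. The only cosmetic difference is that the paper invokes properness and then cites Chirka's result on proper holomorphic maps \cite[p.~31, Propositions~1 and~2]{Ch} to get $\dim V=\dim\pi_\Omega(V)$ in one stroke, whereas you reduce to the irreducible case and appeal to the fibre-dimension theorem for dominant morphisms --- the underlying idea is identical.
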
	
\begin{proof}  Let $\Omega \subset \bC^n$ be an algebraic region of type $(k,n)$, with $V_1, V_2$ in $\bC^n$ and  constants $A, B$ as in Definition \ref{d:Ru} and that $V\subset \Omega$.  From Lemma \ref{l:1},  the projection $\pi_{\Omega}: V \to V_1$ is a proper mapping.    Then $\dim V = \dim \pi_{\Omega}(V)$; see for instance Propositions 1 and 2 of \cite[p. 31]{Ch}. Since $\dim V_1= k$,  it follows that $\dim V \leq k$.
\end{proof}

\subsection{Tangent cone at infinity}

Let us start this section recalling two equivalent definitions of tangent cone at infinity of algebraic sets after Theorem 1 of \cite{LP}. 

\begin{definition} Let $V$ be an algebraic set. The geometric tangent cone $C_{g, \infty}(V)$ of $V$ at infinity is defined by the set of  tangent vectors of $V$ at infinity, in the sense that $v\in C_{g, \infty}(V)$ if and only if there exist sequences $\{x_j\}_{j\in\bN} \subset V$, $\{ t_j\}_{j\in\bN}\subset \bC$, such that $\lim_{j\to\infty}\|x_j\| = \infty$ and $\lim_{j\to \infty}  t_jx_j = v$. 
	
The algebraic tangent cone $C_{a, \infty}(V)$ of $V$ at infinity is defined by the set \[C_{a, \infty}(V)=\{v \in\bC^n \mid f^*(v)=0 \mbox{ for all } f \in I(V) \}, \] 	where $I(V)$ denotes the ideal defining $V$, and for each polynomial $f\in I(V)$, $f^*$ denotes its homogeneous component of highest degree. 
\end{definition} 	
 
It follows by Theorem 1 of \cite{LP} that  $C_{g, \infty}(V)=C_{a, \infty}(V)$ for any algebraic set $V\subset \bC^n$. In particular, Theorem 1 of \cite{LP} can be seen as a version at infinity of  a well-known theorem of H. Whitney \cite{Wh} on tangent cones at a point of an analytic variety.

For any unbounded subset $V$ of $\bC^m$, it is defined in \cite{FS} the tangent vectors at infinity with respect to a sequence of real positive numbers, as follows. 

\begin{definition}\label{d:vec-inf} Let $V \subset \bC^n$ be an unbounded subset. Given a sequence of real positive  numbers $\{t_j\}_{j\in \bN}$ such that $\lim_{j\to\infty} t_j=\infty$, we say that $v \in  \bC^n$ is tangent to $V$ at infinity with respect to $\{ t_j\}_{j \in \bN}$ if there exists  a sequence of points $\{ x_j\}_{j \in \bN} \subset V$ such that $ \lim_{j \to \infty}\frac{1}{t_j}x_j=v$. 
\end{definition}
	
\begin{definition}\label{d:cone-FS} Let $V \subset \bC^n$ be an unbounded subset and  $T = \{t_j\}_{j\in \bN}$ be a sequence of real positive numbers such that $\lim_{j \to \infty} t_j =\infty$. Denote  by $E_T(V)$ the set of $v \in \bC^n$ which are tangent to $V$ at infinity with respect to $T$. We call $E_T(V)$ a tangent cone of $V$ at infinity. When $V$ has a unique  tangent cone at infinity, we denote it by $C_\infty(V)$ and we call  $C_\infty(V)$ the tangent cone of $V$ at infinity.
\end{definition} 
	
In general, the problem of determining the uniqueness of tangent cones at infinity for unbounded sets is a difficult problem. For instance, a conjecture by Meeks \cite[Conjecture 3.15]{Me} states that any embedded minimal surface in $\bR^3$ with quadratic area growth has a unique tangent cone at infinity. See for instance \cite{Ga} for a partial answer of this conjecture. 

\begin{remark}\label{r:cone-alg} Let $X\subset \bC^n$ be an unbounded algebraic set. From Corollary 2.16 of \cite{FS}, we know that $X$ has an unique tangent cone at infinity and, with Theorem 1 of \cite{LP}, $C_{\infty}(X)=C_{a,\infty}(X)=C_{g, \infty}(X)$. See also Proposition 2.21 of \cite{FS}. In particular,  if $v\in C_{\infty}(X)$ then $\lambda v \in C_{\infty}(X)$, for any $\lambda \in \bR$. 
\end{remark}
 
\begin{theorem}\label{t:V-CV-same}  Let $V \subset \bC^n$ be an unbounded subset.  Suppose that 
	$V$ has a unique tangent cone at infinity and this cone lies in an algebraic region $\tilde \Omega$ of type $(k,n)$. Then there exists an algebraic region $\Omega$ of type $(k, n)$ such that $V$ and $C_{\infty}(V)$ lie in $\Omega$.   
 \end{theorem}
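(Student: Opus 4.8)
The plan is to keep the very same pair of subspaces that appears in $\tilde\Omega$ and to enlarge only the constants. Write $\tilde\Omega$ as the algebraic region of type $(k,n)$ attached to subspaces $\tilde V_1,\tilde V_2$ with $\dim\tilde V_1=k$ and to constants $\tilde A,\tilde B$, and fix linear coordinates $(x,y)$ with $x\in\tilde V_1$, $y\in\tilde V_2$. Since the class of algebraic regions of type $(k,n)$ with fixed subspaces is insensitive to replacing the norm of $\bC^n$ by an equivalent one (only the constant $A$ has to be adjusted, $B$ being left untouched), I may assume that the norm is the Hermitian norm for which $\|(x,y)\|^2=\|x\|^2+\|y\|^2$; in particular $\|x\|\le\|z\|$ and $\|y\|\le\|z\|$ for every $z=(x,y)$.

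\emph{Step 1: $C:=C_\infty(V)$ lies in a linear cone.} First I would observe that $C$ is a cone: if $v\in C=E_T(V)$ for some sequence $T=\{t_j\}_{j\in\bN}$ and $\lambda>0$, then $\lambda v\in E_{T'}(V)$ for $T'=\{t_j/\lambda\}_{j\in\bN}$, and since $V$ has a unique tangent cone at infinity, $E_{T'}(V)=C$, so $\lambda v\in C$. Consequently, for $v=(v',v'')\in C$ every positive multiple $\lambda v=(\lambda v',\lambda v'')$ lies in $\tilde\Omega\supseteq C$, i.e. $\lambda\|v''\|\le\tilde A(1+\lambda\|v'\|)^{\tilde B}$ for all $\lambda>0$. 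Dividing by $\lambda$ and minimizing the right-hand side over $\lambda>0$ (put $\lambda=\mu/\|v'\|$ if $v'\ne0$, let $\lambda\to\infty$ if $v'=0$) gives $\|v''\|\le C_0\|v'\|$, where $C_0:=\tilde A\,\inf_{\mu>0}(1+\mu)^{\tilde B}/\mu$ is a finite nonnegative constant. Hence $C\subset\{(x,y):\|y\|\le C_0\|x\|\}$, and therefore every $v=(v',v'')\in C$ with $\|v\|=1$ satisfies $\|v'\|\ge(1+C_0^2)^{-1/2}>0$.

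\emph{Step 2: $V$ lies in an algebraic region with the same subspaces.} Suppose not. Then for each $j\in\bN$ one can pick $z_j=(x_j,y_j)\in V$ with $\|y_j\|>j(1+\|x_j\|)^j$. Since $\|y_j\|>j$ we have $\|z_j\|\to\infty$; set $t_j:=\|z_j\|$ and, after passing to a subsequence, $z_j/t_j\to v$ with $\|v\|=1$. By the uniqueness of the tangent cone at infinity $v\in C$, so by Step 1 its $\tilde V_1$-component $v'$ satisfies $\|v'\|\ge(1+C_0^2)^{-1/2}>0$; hence there is $c'>0$ with $\|x_j\|\ge c' t_j$ for all large $j$. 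But then $\|y_j\|>j\|x_j\|^j\ge j(c't_j)^j$, while $\|y_j\|\le\|z_j\|=t_j$. For all large $j$ we have $j\ge2$ and $c't_j\ge1$, so $t_j\ge j(c't_j)^j\ge 2(c')^2t_j^2$, i.e. $t_j\le\frac{1}{2(c')^2}$, contradicting $t_j\to\infty$. Thus there are constants $A_V,B_V>0$ with $V\subset\{(x,y):\|y\|\le A_V(1+\|x\|)^{B_V}\}$.

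\emph{Conclusion.} Put $A:=\max\{\tilde A,A_V\}$ and $B:=\max\{\tilde B,B_V\}$, and let $\Omega:=\{(x,y):\|y\|\le A(1+\|x\|)^B\}$. Since $1+\|x\|\ge1$, we get $\tilde\Omega\subseteq\Omega$ and $\{(x,y):\|y\|\le A_V(1+\|x\|)^{B_V}\}\subseteq\Omega$, so $\Omega$ is an algebraic region of type $(k,n)$ (with subspaces $\tilde V_1,\tilde V_2$) containing both $C_\infty(V)$ and $V$. I expect the main obstacle to be Step 2 — squeezing the right quantitative contradiction out of the super-polynomial growth of $V$ in the $y$-direction against the information, supplied by Step 1, that every tangent direction of $V$ at infinity keeps a definite component in the $x$-direction — together with the (minor) bookkeeping needed to make sure every algebraic region involved is written with respect to one and the same norm on $\bC^n$.
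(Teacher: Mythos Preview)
Your argument is correct and follows essentially the same route as the paper: keep the subspaces of $\tilde\Omega$ fixed and argue by contradiction that $V$ must fit into an algebraic region over them, the contradiction coming from a tangent vector of $V$ at infinity that is incompatible with $C_\infty(V)\subset\tilde\Omega$. The only real difference is in the endgame. The paper normalizes the offending sequence by $\|w_j''\|$ rather than by $\|z_j\|$, so that the limiting tangent vector lands directly in $V_2\setminus\{0\}$ and violates the defining inequality of $\tilde\Omega$ on the spot; this lets the paper keep the same exponent $B$ (after arranging $B\ge 1$) and avoid your Step~1 entirely. Your Step~1 --- that a cone contained in an algebraic region actually lies in a \emph{linear} cone $\{\|y\|\le C_0\|x\|\}$ --- is a pleasant observation the paper does not isolate; it buys you the freedom to use the more natural scaling $t_j=\|z_j\|$, at the cost of one extra paragraph.
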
 
 \begin{proof} It follows from Definition \ref{d:Ru} that  there exist vector spaces $V_1$ and $V_2$ in $\bC^n$, with $\dim V_1=k$, $\dim V_2=n-k$, $\bC^n= V_1 + V_2$, and positive real numbers $\tilde A, B$ such that for any $z \in \tilde \Omega$ (hence, in particular, for $z\in  C_{\infty}(V)$) we have:  
 	\begin{equation}\label{eq:3}
 		\|z''\| < \tilde  A(1+\|z'\|)^B, 
 	\end{equation}  
 	where $z=z' +z''$, with $z'\in V_1$ and $z'' \in V_2$.  Since $(1+t)^s \leq (1+t)$, for any $t\geq 0$ and $0 \leq s\leq 1$, we may assume in \eqref{eq:3} that $B\geq 1$.
 	
 	We claim that there exists a  positive real number  $R$   such that,  for any $w\in V$, we have: 
 	\begin{equation}\label{eq:2}
 		\|w''\| < R(1+\|w'\|)^{B}, 
 	\end{equation}
 	where $w=w' +w''$, with $w'\in V_1$ and $w'' \in V_2$.     
 	
 	If the claim is not true, this means that there exists a sequence   $\{ w_j \}_{j\in\mathbb{N}}  \subset V$ such that $ \| w_j''\| > k (1 + \|w_j'\|)^{B} $ and, up to a subsequence, we may suppose that   $\lim_{j \to \infty} \frac{ w_j''}{\|w_j''\|} = y_0  \in V_2$, with $\|y_0\|=1$. It follows that $\lim_{j \to \infty}\|w_j''\| =\infty$ and therefore $\lim_{j \to \infty} \|w_j\|= \infty $ since $\bC^n$ is a direct sum of $V_1$ and $V_2$.  We have 
 	
 	\[\frac{1}{k} > \frac{(1 + \|w_j'\|)^{B}}{\| w_j''\|}  \geq \frac{(1 + \|w_j'\|)}{\| w_j''\|} \geq  \frac{\|w_j'\|}{\| w_j''\|} .\] 
 	
 	Let $t_j := (\tilde A)^{-1}\|w_j''\|$. We have  $\lim_{j \to \infty} t_j =\infty$ and 
 	
 	\[ \lim_{j\to\infty} \frac{1}{t_j} w_j = \lim_{j\to\infty} \frac{1}{t_j} \left( w_j' + w_j'' \right) = \tilde A y_0.\] 
 
It follows that  $\tilde A y_0\in C_{\infty}(V)$ by Definition  \ref{d:cone-FS}. Now, since $V_2$ is a linear space and $ y_0 \in V_2$, we have $\tilde Ay_0 \in (V_2 \cap C_{\infty}(V))$. This is a contradiction, since \eqref{eq:3} does not hold for $\tilde Ay_0$.  

Therefore, if follows that  there exists a positive real number  $R$   such that  for any $w\in V$ we have: 
 	\[ 	\|w''\| < R(1+\|w'\|)^{B}, \]
 	where $z=w' +w''$, with $w'\in V_1$ and $w'' \in V_2$.   Let $ A:= \max\{R, \tilde A\}$, then $V$ and $C_{\infty}(V)$  lie in an algebraic region $\Omega$ of type $(k,n)$, with vector spaces $V_1$ and $V_2$ in $\bC^n$, with $\dim V_1=k$, $\dim V_2=n-k$  and positive real numbers $A$ and $B$. This finishes the proof. 
 \end{proof} 	

Bearing in mind that   $C_{\infty}(V)=C_{a,\infty}(V)=C_{g, \infty}(V)$  for any unbounded algebraic set $V\subset\bC^n$ (see Remark \ref{r:cone-alg}) and that, by definition of dimension, $\dim_{\bR}(V)=2\dim_{\bC} (V)$, we finish this section with the following result.

\begin{proposition}\label{p:samedim} Let $V \subset \bC^n$ be an unbounded algebraic set. Then $\dim V = \dim C_{\infty} (V)$. 
\end{proposition}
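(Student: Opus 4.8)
The plan is to reduce Proposition~\ref{p:samedim} to Theorem~\ref{t:V-CV-same} together with the previously recorded facts about algebraic regions. First I would dispose of the easy inequality: since $C_{\infty}(V)=C_{a,\infty}(V)$ is defined by the leading forms $f^*$ of the polynomials $f\in I(V)$, and each $f^*$ vanishes on $V$ in the appropriate sense, one checks directly that $C_{\infty}(V)$ is itself an algebraic set; moreover the standard degeneration argument (the leading forms generate an ideal contained in the associated graded ideal) gives $\dim C_{\infty}(V)\le \dim V$. Alternatively, and more in the spirit of the paper, this direction already follows from remark after Theorem~\ref{th:main} comparing degrees, or from the inclusion of $C_{\infty}(V)$ in an algebraic region of the same type as one containing $V$.

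For the reverse inequality $\dim V\le \dim C_{\infty}(V)$, set $k:=\dim C_{\infty}(V)$. Since $C_{\infty}(V)$ is a pure $k$-dimensional algebraic set (or at least has dimension $k$), Theorem~\ref{t:Ru} shows $C_{\infty}(V)$ lies in some algebraic region $\tilde\Omega$ of type $(k,n)$. Now Theorem~\ref{t:V-CV-same} applies: there is an algebraic region $\Omega$ of type $(k,n)$ containing both $V$ and $C_{\infty}(V)$. In particular $V\subset\Omega$, so by Proposition~\ref{p:recip-Ru} we get $\dim V\le k=\dim C_{\infty}(V)$. Combining the two inequalities yields $\dim V=\dim C_{\infty}(V)$, and since both sets are complex algebraic the equality holds over $\bR$ as well, using $\dim_{\bR}=2\dim_{\bC}$ as noted just before the statement.

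The only point that needs a little care is making sure $C_{\infty}(V)$ genuinely has a well-defined dimension to feed into Theorem~\ref{t:Ru}: by Remark~\ref{r:cone-alg} the unbounded algebraic set $V$ has a unique tangent cone at infinity and $C_{\infty}(V)=C_{a,\infty}(V)$ is an algebraic cone, hence an algebraic set to which Theorem~\ref{t:Ru} and Proposition~\ref{p:recip-Ru} apply. I expect the main (minor) obstacle to be purely bookkeeping: checking that the hypotheses of Theorem~\ref{t:V-CV-same} are met, i.e.\ that $C_{\infty}(V)$ — being $k$-dimensional algebraic — indeed lies in \emph{some} algebraic region of type $(k,n)$, which is exactly the content of Theorem~\ref{t:Ru}. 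Once that is in place the proof is a two-line chain of inclusions and dimension inequalities.
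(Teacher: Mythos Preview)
Your argument is correct and follows essentially the same route as the paper: the inequality $\dim C_\infty(V)\le\dim V$ is disposed of first (the paper simply cites \cite[Corollary~2.18]{FS}), and then $\dim V\le k:=\dim C_\infty(V)$ is obtained exactly as you outline, by placing $C_\infty(V)$ in an algebraic region of type $(k,n)$, invoking Theorem~\ref{t:V-CV-same}, and then Proposition~\ref{p:recip-Ru}. One small correction: to place $C_\infty(V)$ in an algebraic region of type $(k,n)$ you should cite the fact recorded just before Proposition~\ref{p:recip-Ru} (from \cite[p.~677]{Ru}, valid for any algebraic set of dimension $\le k$) rather than Theorem~\ref{t:Ru}, since the latter is stated only for \emph{pure} $k$-dimensional sets and $C_\infty(V)$ need not be pure.
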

\begin{proof} It follows from Corollary 2.18 of \cite{FS} that $\dim C_{\infty}(V) \leq \dim V$.
	
On the other hand, let $k=\dim C_{\infty}(V)$, which is an algebraic set. By \cite[page 677]{Ru}, $C_{\infty}(V)$ lie in an algebraic region $\tilde \Omega$ of type $(k,n)$ and by Theorem \ref{t:V-CV-same}, $V$ and $C_{\infty}(V)$ lie in a common algebraic region $\Omega$ of type $(k, n)$. Then, it follows from Proposition \ref{p:recip-Ru} that $\dim V \leq k=\dim C_{\infty}(V)$.
\end{proof}

The previous result can be seen as a version at infinity of a result of Whitney on tangent cones at point of an analytic variety; see for instance Lemma 8.11 of \cite{Wh}.  

\section{Proof of main theorem}\label{s:degree}

We recall the definition of degree of an algebraic set. We follow section 11.3 of \cite{Ch}. The complex projective space is denoted by $\bP^n$ and $\bC^n$ is identified with the open set $\{[x_0: x_1: \ldots :x_n] \in \bP^n \mid x_0\neq 0 \}$ of $\bP^n$ through the mapping $\tau:\bC^n \to \bP^n$, $\tau(x_1, \ldots, x_n)=[1:x_1:\ldots:x_n]$.

\begin{definition} Let $X\subset \bC^n$ be an algebraic set. The degree of $X$, denoted by $\deg X$,  is the degree of its closure in $\bP^n$.   
\end{definition}

In the proof of Theorem \ref{th:main}, we consider the following.  
Let $X\subset \bC^n$ be a pure $k$-dimensional algebraic set.  We have that $X$ has a unique tangent cone at infinity and it is an algebraic set; see Remark \ref{r:cone-alg}. From Proposition \ref{p:samedim} and Theorem \ref{t:V-CV-same}, there exists an algebraic region $\Omega$ of type $(k, n)$ such that $X$ and $C_{\infty}(X)$ lie in $\Omega$. Then, there exist  vector spaces $V_1, V_2\subset \bC^n$ and  constants $A, B$ as in Definition \ref{d:Ru}. We may choose linear coordinates $(x, y)$ in $\bC^n$ such that $x\in V_1$ and $y\in V_2$. In this coordinate system, we denote by $\pi_{\Omega}$ the canonical projection from  $\bC^n$ to $V_1$, i.e., $\pi_{\Omega}(x,y)=x$. Moreover, we may assume that $V_1$ and $V_2$ are orthogonal to each other and that $\pi_{\Omega}$ is an orthogonal projection; see Theorem 3 of page 78 of \cite{Ch}.  By similar argument as in proof of Lemma \ref{l:1}, it follows that the closures of $X$ and $V_2$ (respectively, the closures of  $C_{\infty}(X)$ and $V_2$) in $\bP^n$ do not have points at infinity\footnote{here, the phrase ``at infinity'' means the set  $\bP^n \setminus \bC^n$.} in common. Then, it follows by Corollary 1 of \cite[p. 126]{Ch} the following fact concerning the projection $\pi_{\Omega}$: 

\begin{lemma}\label{l:proj} The restriction $\pi_{\Omega}: X \to V_1$ $($respectively, $\pi_{\Omega}: C_{\infty} (X) \to V_1$$)$ is a ramified covering  with number of sheets equal to $\deg X$ $($respectively, $\deg C_{\infty}(X))$. 	 
\end{lemma}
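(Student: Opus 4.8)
The statement to prove is Lemma~\ref{l:proj}: that $\pi_\Omega$ restricted to $X$ (resp.\ to $C_\infty(X)$) is a ramified covering whose number of sheets equals $\deg X$ (resp.\ $\deg C_\infty(X)$).

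\medskip

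\noindent\textbf{Plan of proof.} The plan is to reduce everything to the classical fact (Chirka, \cite[Corollary 1, p.~126]{Ch}) that a linear projection $\bP^n \dashrightarrow \bP^k$ restricted to a pure $k$-dimensional projective algebraic set $\overline{X}\subset\bP^n$ is a ramified covering of degree $\deg\overline{X}$, provided the center of projection meets $\overline{X}$ in the empty set. So the work splits into three steps. First, I would set up the projective picture: extend the orthogonal linear projection $\pi_\Omega\colon\bC^n\to V_1$ to a rational map $\bP^n\dashrightarrow\bP(V_1\oplus\bC)=\bP^k$ in the obvious way (the center of the projection is the projectivization of $V_2$ sitting inside the hyperplane at infinity $\bP^n\setminus\bC^n$), and recall that $\overline{X}$ — the closure in $\bP^n$ — is a pure $k$-dimensional projective algebraic set because $X$ is a pure $k$-dimensional algebraic set in $\bC^n$ (here we already know $\dim X = k$ and, by Proposition~\ref{p:samedim}, $\dim C_\infty(X)=k$ as well). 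Second, I would verify the only nontrivial hypothesis of Chirka's corollary, namely that the center of the projection is disjoint from $\overline{X}$; this is exactly the footnoted remark in the excerpt — ``the closures of $X$ and $V_2$ in $\bP^n$ do not have points at infinity in common'' — and I would spell out why this holds: if $p\in\overline{X}\cap(\bP^n\setminus\bC^n)$ lay in the projectivization of $V_2$, one could pick a sequence $z_j=(x_j,y_j)\in X$ with $\|z_j\|\to\infty$ and $z_j/\|z_j\|\to (0,y_0)$ with $y_0\in V_2$, $\|y_0\|=1$; but the algebraic-region inequality $\|y_j\|\le A(1+\|x_j\|)^B$ forces the $V_1$-component to dominate or at least be comparable, contradicting $x_j/\|z_j\|\to 0$ — this is the same computation as in Lemma~\ref{l:1} and in the proof of Theorem~\ref{t:V-CV-same}, now read at the level of limits of unit vectors. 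The identical argument applies verbatim to $C_\infty(X)$ in place of $X$, since $C_\infty(X)$ also lies in the algebraic region $\Omega$. Third, having checked the hypotheses, I would invoke \cite[Corollary 1, p.~126]{Ch} to conclude that $\pi_\Omega\colon X\to V_1$ is a ramified covering with $\deg\overline{X}=\deg X$ sheets, and likewise $\pi_\Omega\colon C_\infty(X)\to V_1$ is a ramified covering with $\deg C_\infty(X)$ sheets.

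\medskip

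\noindent\textbf{Main obstacle.} The only genuinely technical point is the disjointness of $\overline{X}$ from the center of projection at infinity. Everything else is bookkeeping: matching the affine projection with its projective extension, identifying $V_1$ with $\bP^k$ minus its hyperplane at infinity, and citing Chirka. So I expect the heart of the write-up to be a clean statement of the limit-of-unit-vectors argument showing $\overline{X}\cap\overline{V_2}\cap(\bP^n\setminus\bC^n)=\varnothing$, which — as the excerpt itself signals — is ``a similar argument as in proof of Lemma~\ref{l:1}.'' One should also note that properness of $\pi_\Omega|_X$ (already essentially Lemma~\ref{l:1}, since $X\subset\Omega$) is what guarantees the covering is finite-to-one and onto, so the phrase ``ramified covering'' is legitimate; this too comes for free from the algebraic-region inequality. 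I would present the proof as: (1) recall $\overline{X}$ is pure $k$-dimensional projective; (2) prove the disjointness at infinity for both $X$ and $C_\infty(X)$ via the unit-vector limit argument; (3) apply \cite[Corollary~1, p.~126]{Ch} to read off the number of sheets as the degree in each case.
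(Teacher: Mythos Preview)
Your proposal is correct and follows exactly the paper's approach: the paper's proof consists solely of the setup paragraph preceding the lemma (disjointness of $\overline{X}$ and $\overline{V_2}$ at infinity via the argument of Lemma~\ref{l:1}) together with the citation to \cite[Corollary~1, p.~126]{Ch}. You have simply spelled out in detail what the paper indicates in one sentence.
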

\fin

\subsection{Proof of Theorem \ref{th:main}} First, we prove that $X$ is an algebraic set and that $\dim X = \dim C_{\infty}(X)$. By Theorem \ref{t:Ru}, $C_{\infty}(X)$ lies in an algebraic region of type $(k, n)$. It follows from  Theorem \ref{t:V-CV-same} that $X$ and $C_{\infty}(X)$ lie in an algebraic region $\Omega$ of type $(k,n)$. Then,  by Theorem \ref{t:Ru}, we have that $X$ is an algebraic set. That $\dim X = \dim C_{\infty}(X)$, it follows by Proposition \ref{p:samedim}. 
 
\vspace{.2cm}

Now, we assume that $X$  is Lipschitz normally embedded at infinity. Thus, there exist a compact subset $K \subset X$ and a positive real number $C$ such that 
\begin{equation}\label{eq:p1}
	d_X(x_1, x_2) \leq C \|x_1- x_2\|, \mbox{ for all } x_1, x_2 \in X\setminus K.
\end{equation}

We denote $\deg X=d_1$ and $\deg C_{\infty}(X)=d_2$. 

From Theorem \ref{t:V-CV-same} and Proposition \ref{p:samedim}, there exists an algebraic region $\Omega $ of type $(k,n)$ containing $X$ and $C_{\infty}(X)$. We may consider $\Omega$ as in the paragraph before Lemma \ref{l:proj}. Thus, there exist  vector spaces $V_1$ and $V_2$ in $\bC^n$, with $\dim V_1=k$, $\dim V_2=n-k$, $\bC^n= V_1 + V_2$, and positive real numbers $A, B$ such that for any $z \in (X  \cup C_{\infty}(X ))$ the following holds: 
	\begin{equation}\label{eq:4}
		\|z''\| < A(1+\|z'\|)^B, 
	\end{equation}  
	where $z=z' +z''$, with $z'\in V_1$ and $z'' \in V_2$.   
	
Let $\pi_{\Omega}: \bC^n \to V_1$ be the  projection. As in the paragraph before Lemma \ref{l:proj}, we may assume that $V_1$, $V_2$  are orthogonal to each other and that $\pi_{\Omega}$ is an orthogonal projection. 

It follows by Lemma \ref{l:proj} that the restriction   $\pi_{\Omega}:X \to V_1$  (respectively, $\pi_{\Omega}:C_{\infty}(X) \to V_1$) is a ramified covering with number of sheets equal to $d_1$  (respectively, $d_2$). 

Let $\Sigma_{1}\subset V_1 $ and $\Sigma_{2}\subset V_1 $ be the ramification locus of the restriction of $\pi_{\Omega}$ to $X$ and to $C_{\infty}(X)$ respectively. 

We have that $\Sigma_{i}$ is a  complex algebraic set such that $\dim \Sigma_{i} < k$, for $i=1, 2$. Then, there exists a vector $v\in V_1 \setminus (C_{\infty}(\Sigma_{1}) \cup C_{\infty}(\Sigma_{2})) $. In particular, for big enough $t \in \bR_{>0}$, we have $t v \notin (\Sigma_{1} \cup \Sigma_2)$. 
	
Let $\lambda_1(t), \ldots,  \lambda_{d_1}(t) \in X$ be the different liftings of $tv$ by $\pi_{\Omega}$ restricted to $X$, i.e., $\pi_{\Omega}(\lambda_i(t))=tv$, $i=1, \ldots, d_1$. 

Since $\bC^n=V_1 +V_2$, we may write $\lambda_i(t)= tv +y_i(t)$, with $y_i(t) \in  V_2$, for $i=1, \ldots, d_1$.  
	
We have that $\frac{\|y_i(t)\|}{t}$ is bounded when $t\to\infty$, for $i=1, \ldots, d_1$. Otherwise, there exists an index  $i\in \{1, \ldots, d_1\}$ and there exists a sequence of positive numbers $\{t_l\}_{l \in \bN}$ such that $\frac{\|y_i(t_l)\|}{t_l} >l$, for any $l$. We may assume that  $\lim_{l\to\infty} \frac{y_i(t_l)}{\|y_i(t_l)\|}=w_i$, with $w_i \in V_2$.  It follows  that 
	\begin{equation}\label{eq:lim1}
		\lim_{l\to\infty} \frac{A}{\|y_i(t_l)\|} \lambda_i(t_l)= A\lim_{l\to\infty} \frac{t_l v +y_i(t_l)}{\|y_i(t_l)\|} = 0 +Aw_i. 
	\end{equation}
It follows by definition of $C_{\infty}(X)$ and \eqref{eq:lim1} that $A w_i \in C_{\infty}(X)$ and, therefore, $A w_i \in (C_{\infty}(X) \cap V_2)$. This contradicts $C_{\infty} (X) \subset \Omega$. 

	Therefore, for $i\in \{1, \ldots, d_1\}$, we have  that $\frac{\|y_i(t)\|}{t}$ is bounded when $t\to\infty$. Thus, we may assume that there exists a sequence of real positive numbers $\{t_j\}_{j\in\bN}$ such that $\lim_{j \to \infty} t_j =\infty$ and, for $i=1, \ldots, d_1$,  $\lim_{j \to \infty} \frac{y_i(t_j)}{t_j} = v_i$, with $v_i \in V_2$. 
	
	Then, 
	
	\begin{equation}\label{eq:d-v}
		\lim_{j\to\infty}	\frac{1}{t_j} \lambda_i(t_j) = \lim_{j\to\infty} \left(\frac{t_j v}{t_j} + \frac{y_i(t_j)}{t_j}\right)=v+ v_i, \mbox{ for } i=1, \ldots, d_1. 
	\end{equation}
	
By definition of $C_{\infty}(X)$, we have  $v+v_i \in C_{\infty}(X)$, for $i=1, \ldots, d_1$. It follows that the curves $\beta_i(t)=t(v+v_i)$ lie  in $C_{\infty}(X)$, for $i=1, \ldots, d_1$; see Remark \ref{r:cone-alg}. Therefore, $\beta_1, \ldots, \beta_{d_1}$ are liftings of $tv$ by $\pi_{\Omega}$ restricted to $C_{\infty}(X)$, i.e., $\pi_{\Omega}(\beta_i(t))=tv$, $i=1, \ldots, d_1$. 

\vspace{.2cm}
	
	If we suppose that $d_1>d_2$, then   there exists two distinct indexes  $s_1, s_2 \in \{1, \ldots, d_1\}$ such that   $\beta_{s_1}=\beta_{s_2}$, which implies $v_{s_1}=v_{s_2}$ by definitions of $\beta_{s_1}$ and $\beta_{s_2}$.  Then,   by  \eqref{eq:d-v}, it follows that  
	\begin{equation}\label{eq:to0} 
		\lim_{j\to\infty}\frac{1}{t_j} \left(\lambda_{s_1}(t_j) -\lambda_{s_2}(t_j)\right)=0. 
	\end{equation}
	
Since $v\in V_1 \setminus (C_{\infty}(\Sigma_{1}) \cup C_{\infty}(\Sigma_{2})) $, it follows from Lemma 2.4 of  \cite{LP} that there exists $\delta >0$ such that, for $j$ big enough,   
the following hold: 
\begin{equation}\label{eq:dist}
	\mathrm{dist}(t_jv, \Sigma_{1})>\delta t_j .
\end{equation}

Now, we follow Lemma 2.2 of \cite{DTi} and proof of Theorem 1.1 of \cite{FS}. For any path $\omega$ in $X$ joining $\lambda_{s_1}(t_j)$ to $\lambda_{s_2}(t_j)$, the path $\pi_{\Omega}(\omega)$ is a loop in $V_1$ with base point $t_j v$ which is not contractible in $V_1 \setminus \Sigma_1$. Then, with  \eqref{eq:dist}, it follows that the length of $\pi_{\Omega}(\omega)$  is at least $2\delta t_j$. Now,  since $\pi_{\Omega}$ is an orthogonal projection, it follows that 

\begin{equation}\label{eq:fin1}
	d_X(\lambda_{s_1} (t_j), \lambda_{s_2} (t_j)) \geq 2\delta t_j.
\end{equation} 
 
Then, by \eqref{eq:p1} and \eqref{eq:fin1}, we have:
	
\begin{equation}\label{eq:3-1}
	\frac{1}{t_j}C\|\lambda_{s_1}(t_j) - \lambda_{s_2}(t_j)\| \geq \frac{1}{t_j} d_X(\lambda_{s_1}(t_j), \lambda_{s_2}(t_j) ) \geq 2\delta, \mbox{ for $j$ big enough.}
\end{equation}
	
From \eqref{eq:to0} and \eqref{eq:3-1}, as $j\to \infty$, it follows the following contradiction: 
	
	\[ 0 \geq 2\delta.\]
Therefore, we have $d_1 \leq d_2$.  Now, as mentioned in the Introduction, $d_1 \geq d_2$ is always true for any pure dimensional complex algebraic sets $X$ and $C_{\infty}(X)$; see  equality (2.1) of \cite{BFS}. This  ends   the proof. 
\fin

\end{document}